\newtheorem{theorem}{Theorem}[section]
\newtheorem{lemma}[theorem]{Lemma}
\theoremstyle{definition}
\newtheorem{remark}{Remark}
\newcommand{\M}{{\rm M}}
\newcommand{\I}{{\rm I}}
\newcommand{\E}{{\rm E}}
\newcommand{\GL}{{\rm GL}}
\newcommand{\SL}{{\rm SL}}
\newcommand{\Gal}{{\rm Gal}}
\newcommand{\Char}{{\rm char}}
\begin{document}

	\title[Free subgroups in maximal subgroups]{On free subgroups in maximal subgroups\\ of skew linear groups}
	
	 \author[Bui Xuan Hai]{Bui Xuan Hai}\thanks{This work is funded by Vietnam National Foundation for Science and Technology Development (NAFOSTED) under Grant No. 101.04-2016.18.}
	 \email{ bxhai@hcmus.edu.vn; huynhvietkhanh@gmail.com}
	  \address{Faculty of Mathematics and Computer Science, University of Science, VNU-HCM, 227 Nguyen Van Cu Str., Dist. 5, Ho Chi Minh City, Vietnam.} 
	  
	\author[Huynh Viet Khanh]{Huynh Viet Khanh}
		
	\keywords{maximal subgroups; general linear groups; locally finite division rings. 
		\protect \indent 2010 {\it Mathematics Subject Classification.} 16K20, 16K40, 16R50.}
	
	\maketitle
	
	\begin{abstract} The study of the existence of free groups in skew linear groups have been begun since the last decades of the 20-th century. The starting point is the theorem of Tits (1972), now often is referred as Tits' Alternative,  stating that every finitely generated subgroup of the general linear group $\GL_n(F)$ over a field $F$ either contains a non-cyclic free subgroup or it is solvable-by-finite. In this paper, we study the existence of non-cyclic free subgroups in maximal subgroups of an almost subnormal subgroup of the general skew linear group over a locally finite division ring.
		
	\end{abstract}

	\section{Introduction}
	
	Let $D$ be a division ring and $n$ a positive integer. The subject of this paper is to study the problem on the existence of non-cyclic free subgroups in maximal subgroups of an almost subnormal subgroup of the general skew linear group $\GL_n(D)$ provided $D$ is locally finite. Recall that  $D$ is \textit{locally finite} if for every finite subset $S$ of $D$, the division subring $F(S)$ of $D$ generated by the set $F\cup S$ is a finite dimensional vector space over $F$, where $F$ is the center of $D$. Let $G$ be an arbitrary group and $H$ a subgroup of $G$. Following Hartley \cite{Hartley_89}, $H$ is \textit{almost subnormal} in $G$ if there exists such a sequence of subgroups 
		$$H=H_0\leq H_1\leq\ldots\leq H_r=G,$$
	where for any $0\leq i\leq r-1$, either $H_i$ is normal in $H_{i+1}$ or $H_i$ is a subgroup of finite index in $H_{i+1}$.  By definition, every subnormal subgroup in a group  is almost subnormal. In \cite[Example~8]{Haz-Wad}, for a given prime power $k=q^m$ and a local field $F$, Hazrat and Wadsworth have constructed a division ring $D$ with center $F$ in which for any odd prime divisor $p$ of $k+1$, there exists a non-normal maximal subgroup of index $p$ in the multiplicative group $D^*$ of $D$. In view of these examples, one can conclude that there exists a various number of division rings containing almost subnormal subgroups which are not subnormal.
	
	The problem of the existence of non-cyclic free subgroups in maximal subgroups of  skew linear groups was studied by several authors. Let $D$ be a division ring and assume that $M$ is a maximal subgroup of the group $\GL_n(D)$. In \cite{free}, Mahdavi-Hezavehi studied this problem for the case when $n=1$ and $D$ is centrally finite. It was proved that either $M$ contains a non-cyclic free subgroup or there exists a maximal subfield $K$ of $D$ such that $K/F$ is a Galois extension, $K^*$ is a normal subgroup of $M$ and $M/K^*\cong \Gal(K/F)$. In \cite{hai-tu}, B. X. Hai and N. A. Tu extended the study for the case when $D$ is locally finite, $M$ is a maximal subgroup of a subnormal subgroup $G$ of the group $\GL_1(D)$. A surprising result obtained in \cite[Theorem 3.4]{hai-tu} states that if $M$ does not contain non-cyclic free subgroups then $D$ must be centrally finite, $M$ must be absolutely irreducible, and there exists a maximal subfield $K$ of $D$ such that $K/F$ is a Galois extension and $\Gal(K/F)\cong M/K^*\cap G$. The general case for $n\geq 1$ and a maximal subgroup $M$ of the group $\GL_n(D)$ over a centrally finite division ring $D$ was considered in \cite{dorbidi2014}, \cite{moghaddam2017} and \cite{mah}. Here, we consider the more general case for $n\geq 1$ and a maximal subgroup $M$ in an almost subnormal subgroup $G$ of the general linear group $\GL_n(D)$ over a locally finite division ring $D$. The new  result obtained in Theorem \ref{theorem_2.9} generalizes all previous results mentioned above.
	
	Throughout this paper,  $F$ and $D^*$ denote the center and the multiplicative group of a division ring $D$ respectively. For a positive integer $n, \M_n(D)$ is the matrix ring of degree $n$ over $D$. We identify $F$ with $F\I_n$ via the ring isomorphism $a\mapsto a\I_n$, where $\I_n$ is the identity matrix of degree $n$. If $S$ is a subset of $\M_n(D)$, then $F[S]$ denotes the subring of $\M_n(D)$ generated by $S\cup F$. Also, in the case $n=1$, we denote by $F(S)$ the division subring of $D$ generated by the set $F\cup S$. If $H$ and $K$ are two subgroups in a group $G$, then $N_K(H)$ denotes the set of all elements $k\in K$ such that $k^{-1}Hk\leq H$, i.e., $N_K(H)=K\cap N_G(H)$. If $A$ is a ring or a group, then $Z(A)$ denotes the center of $A$.
	
	Let $V =D^n= \left\{ {\left( {{d_1},{d_2}, \ldots ,{d_n}} \right)\left| {{d_i} \in D} \right.} \right\}$. For any subgroup $G$ of $\GL_n(D)$, $V$ may be viewed as $D$-$G$ bimodule. Recall that a subgroup $G$ of $\GL_n(D)$ is  \textit{irreducible} (resp. \textit{reducible, completely reducible}) if $V$ is irreducible (resp. reducible, completely reducible) as $D$-$G$ bimodule. If $F[G]=\M_n(D)$, then   $G$ is  \textit{absolutely irreducible} over $D$. If $G$ is irreducible, then $G$ is \textit{imprimitive} if there exists an integer $ m \geq 2$ such that $V =  \oplus _{i = 1}^m{V_i}$ as left $D$-modules and for any $g \in G$ the mapping $V_i \to V_ig$ is a permutation of the set $\{V_1, \cdots, V_m\}$. If $G$ is irreducible and not imprimitive, then $G$ is \textit{primitive}. 
		
\section{Auxiliary  results}



If $S$ is a subset of $D$ and every element of $S$ is algebraic over $F$, then we say that $S$ is \textit{algebraic} over $F$. The following fact is obvious.

\begin{lemma} \label{lemma_2.2} Let $D$ be a  division ring with center $F$ and $G$ be a subgroup of $D^*$. If $F[G]$ is algebraic over $F$, then $F(G)=F[G]$. \null\hfill $\square$
\end{lemma}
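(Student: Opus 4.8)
The plan is to show that $F[G]$ is already a division ring; since $F(G)$ is by definition the smallest division subring of $D$ containing $F\cup G$, and the inclusion $F[G]\subseteq F(G)$ always holds, it suffices to verify that $F[G]$ is closed under taking inverses of nonzero elements, which then forces $F(G)=F[G]$.

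First I would record the easy inclusion: $F(G)$ is a division ring containing both $F$ and $G$, so it contains the subring they generate, giving $F[G]\subseteq F(G)$. Moreover $F[G]$ is a subring of the division ring $D$, hence an integral domain. Thus the entire content of the lemma reduces to showing that every nonzero $x\in F[G]$ has its inverse inside $F[G]$.

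The key step is to restrict attention to the singly generated subring $F[x]$ for a fixed nonzero $x\in F[G]$. Because $F$ is the \emph{center} of $D$, every element of $F$ commutes with $x$, and powers of $x$ commute among themselves, so $F[x]$ is a \emph{commutative} subring of $D$. By hypothesis $x$ is algebraic over $F$, so $F[x]$ is a finite-dimensional $F$-algebra, and as a subring of $D$ it is an integral domain. A finite-dimensional commutative domain over a field is itself a field: for any nonzero $a\in F[x]$, the $F$-linear map $y\mapsto ay$ is injective on the finite-dimensional space $F[x]$, hence surjective, so that $1$ lies in its image and $a$ is invertible in $F[x]$. Applying this to $a=x$ yields $x^{-1}\in F[x]\subseteq F[G]$.

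Since $x$ was an arbitrary nonzero element, $F[G]$ is a division ring containing $F\cup G$, whence $F(G)\subseteq F[G]$ and the two coincide. I do not anticipate a genuine obstacle here; the single point demanding care is the use of the centrality of $F$ to guarantee that $F[x]$ is commutative, since that is precisely what licenses the elementary argument that a finite-dimensional domain over $F$ is a field.
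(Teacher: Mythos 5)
Your proof is correct. The paper states this lemma without proof (it is declared obvious), and your argument --- inverting a nonzero element $x\in F[G]$ inside the finite-dimensional domain $F[x]$ via the injective-hence-surjective multiplication map --- is exactly the standard justification one would supply; the only remark worth making is that the commutativity of $F[x]$, which you single out as the delicate point, is not actually needed: the same linear-algebra argument shows that \emph{any} finite-dimensional domain over a central subfield is a division ring, and alternatively one can write $x^{-1}$ explicitly from the minimal polynomial of $x$, whose constant term is nonzero.
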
 

\begin{lemma} \label{lemma_2.3} Let $D$ be a locally finite division ring with center $F$ and $G$ be a subgroup of $\GL_n(D)$. If $G$ contains no non-cyclic free subgroups, then $G$ is (locally solvable)-by-(locally finite).
	
\end{lemma}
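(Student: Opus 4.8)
The plan is to first reduce everything to finitely generated subgroups, where the classical Tits' Alternative can be brought to bear, and then to reassemble the local information into the global structural statement. Concretely, it suffices to show (i) that every finitely generated subgroup $H$ of $G$ is solvable-by-finite, and (ii) that a group all of whose finitely generated subgroups are solvable-by-finite is, in the present setting, actually (locally solvable)-by-(locally finite). Step (i) is where the hypotheses on $D$ and on free subgroups enter; step (ii) is the bookkeeping that produces a genuine normal subgroup.

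For step (i), let $H=\langle g_1,\dots,g_k\rangle$ and let $S$ be the finite set of all entries of the matrices $g_1^{\pm 1},\dots,g_k^{\pm 1}$. Since $D$ is locally finite, the division subring $D_0:=F(S)$ is finite dimensional over $F$, and $H\le\GL_n(D_0)$. Writing $Z_0$ for the center of $D_0$ (a finite extension of $F$) and $d^2=\dim_{Z_0}D_0$, a splitting field of $D_0$ gives $D_0\otimes_{Z_0}\overline{Z_0}\cong\M_d(\overline{Z_0})$ and hence an embedding $\GL_n(D_0)\hookrightarrow\GL_{nd}(K)$ with $K=\overline{Z_0}$ a field. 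Thus $H$ is (isomorphic to) a finitely generated linear group over a field which, being a subgroup of $G$, contains no non-cyclic free subgroup. By Tits' Alternative, $H$ is solvable-by-finite, and so $G$ is locally (solvable-by-finite).

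For step (ii) I would form the locally solvable radical $R$ of $G$, i.e. the unique maximal normal locally solvable subgroup; it exists because locally solvable groups are closed under extensions (so the product of two normal locally solvable subgroups is again locally solvable) and under directed unions. It then remains to prove that $G/R$ is locally finite, equivalently that the image in $G/R$ of each finitely generated $H\le G$ is finite. I expect this to be the main obstacle: step (i) only provides, for each such $H$, a solvable normal subgroup of finite index, and this subgroup is a priori normal only in $H$, not in $G$, so it need not land inside $R$. The natural way to overcome this is to exploit the directed union $G=\bigcup_\lambda G_\lambda$, where $G_\lambda=G\cap\GL_n(D_\lambda)$ and $D_\lambda$ runs over the finite dimensional division subrings of $D$ containing $F$; for each $\lambda$ one has a canonical (characteristic) solvable normal subgroup of $G_\lambda$ of finite index, for instance the intersection of $G_\lambda$ with the identity component of its Zariski closure under the field embedding above, and one must show that these canonical subgroups are compatible along the directed system and generate a normal locally solvable subgroup with locally finite quotient. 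It is worth noting that if $D$ were centrally finite one could embed all of $G$ into a single $\GL_N(K)$ and obtain solvable-by-finite outright; it is precisely the merely local finiteness of $D$, which forces the index and the derived length to grow with $\lambda$, that degrades the conclusion to the two ``locally'' qualifiers.
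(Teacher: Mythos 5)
Your step (i) is exactly the paper's first step (the paper passes to matrices over $F$ via the regular representation where you use a splitting field; this difference is immaterial), but your step (ii) contains a genuine gap, and it is precisely the step you yourself flag as ``the main obstacle.'' The passage from ``every finitely generated subgroup of $G$ is solvable-by-finite'' to ``$G$ is (locally solvable)-by-(locally finite)'' is not bookkeeping: it is a theorem about skew linear groups over locally finite-dimensional division rings, and the paper disposes of it by citing \cite[3.3.9, p.103]{shirvani-wehrfritz}. You neither cite such a result nor prove it: your plan ends with ``one must show that these canonical subgroups are compatible along the directed system and generate a normal locally solvable subgroup with locally finite quotient,'' which is exactly the unproved content. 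As written, the proposal proves only that $G$ is locally (solvable-by-finite).

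Moreover, the route you sketch has concrete problems. The subgroup $G_\lambda \cap \overline{G_\lambda}^{\,0}$ (identity component of the Zariski closure) is taken inside an embedding $\GL_n(D_\lambda)\hookrightarrow \GL_{nd_\lambda}(\overline{Z_\lambda})$ whose degree and base field change with $\lambda$; these embeddings do not form a directed system in any obvious way --- note in particular that $D_\lambda\subseteq D_\mu$ does \emph{not} imply $Z(D_\lambda)\subseteq Z(D_\mu)$, so even the splitting fields are not nested --- and there is no uniform bound on the indices $[G_\lambda : G_\lambda\cap \overline{G_\lambda}^{\,0}]$ or on derived lengths, so compatibility is far from automatic. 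Finally, your closing remark is false: even when $D$ is centrally finite (indeed even when $D=F$ is a field) one cannot conclude solvable-by-finite. The group $\SL_2(\overline{\mathbb{F}}_p)$ is an infinite, locally finite (hence free-subgroup-free) linear group of degree $2$ that is not solvable-by-finite, since its central quotient is infinite simple. So the ``locally finite'' qualifier on the quotient is unavoidable already in the linear case; what genuine linearity buys (via Zassenhaus's bound on derived lengths of solvable linear groups of fixed degree) is only that the kernel can be taken solvable rather than locally solvable. This also shows that any correct proof of step (ii) cannot proceed by forcing a single solvable-by-finite structure on $G$, and must instead engage with the locally finite quotient directly, as the cited result of Shirvani and Wehrfritz does.
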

\begin{proof} For any finite subset $S\subseteq G$, let $H$ be the subgroup of $G$ generated by $S$. Assume that $K$ is the division subring of $D$ generated by all entries of all matrices in $S$ over $F$, and let $r=[K:F]$. Viewing $H$ as a subgroup of $\GL_r(F)$, by \cite[Corollary 1]{tits}, $H$ is solvable-by-finite. Hence, $G$ is locally  solvable-by-finite. By \cite[3.3.9, p.103]{shirvani-wehrfritz}, $G$ is (locally solvable)-by-(locally finite). 
\end{proof}
\begin{remark}\label{rem:1}
Recall that a field $F$ is \textit{locally finite} if every its finitely generated subfield is finite. Obviously $F$ is locally finite if and only if the prime subfield $P$ of $F$ is finite and $F$ is algebraic over $P$. If $D$ is a locally finite division ring with such a center $F$, then Jacobson's theorem \cite[Theorem 13.11, p.208]{lam} yields  $D=F$. Hence, if $D$ is a non-commutative locally finite division ring, then the center $F$ of $D$ is not locally finite.
\end{remark}
 
 \begin{lemma} \label{lemma_2.4} Let $D$ be a non-commutative locally finite division ring with center $F$ and $G$ be an almost subnormal subgroup of $\GL_n(D)$. If $G\not\subseteq F$, then $F[G]=\M_n(D)$.
\end{lemma}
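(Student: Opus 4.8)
The plan is to prove the equivalent assertion that $G$ is absolutely irreducible, extracting it from two structural facts. First I record the place where local finiteness enters decisively: every $g\in\GL_n(D)$ is algebraic over $F$, since its finitely many entries generate a division subring $K$ with $[K:F]<\infty$, so that $g\in\M_n(K)$, a finite-dimensional $F$-algebra. Hence $F[G]$ is a directed union of finite-dimensional $F$-subalgebras, even though $\M_n(D)$ itself may be infinite-dimensional over $F$. I would then invoke the standard characterization: since $V=D^n$ has finite $D$-dimension $n$, a Burnside--Jacobson density argument shows that $F[G]=\M_n(D)$ as soon as (a) $G$ is irreducible and (b) the centralizer $C:=C_{\M_n(D)}(G)$ equals $F$. (When $D$ is centrally finite this is just the double centralizer theorem; in general the finite-dimensional approximations supplied by local finiteness legitimize the same conclusion.) So the problem splits into (a) and (b).

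For (a) I would induct on the length $r$ of a series $G=H_0\le\cdots\le H_r=\GL_n(D)$, where each $H_i$ is normal in, or of finite index in, $H_{i+1}$. Since $G\not\subseteq F$ forces $H_1\not\subseteq F$, the inductive hypothesis gives that $H_1$ is absolutely irreducible, so it remains to descend one step. At a normal step $G\trianglelefteq H_1$, Clifford's theorem shows that $V|_G$ is completely reducible and that $H_1$ transitively permutes its homogeneous components; at a finite-index step I would pass to the normal core of $G$ in $H_1$ and argue likewise. The real content is to exclude a nontrivial system of homogeneous components, i.e. to rule out that $G$ is properly reducible; here I expect to use that the series climbs all the way to $\GL_n(D)$ together with local finiteness, reducing a hypothetical block decomposition to a finite-dimensional situation where it contradicts irreducibility of the overgroup.

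For (b), Schur's lemma together with (a) shows that $C$ is a division ring with $F\subseteq C\subseteq\M_n(D)$, and in particular $C^\times=C_{\GL_n(D)}(G)$. It then remains to prove that the centralizer of the noncentral almost subnormal subgroup $G$ is central. I would run a Cartan--Brauer--Hua argument, propagated along the series, exploiting that $C$ is invariant under conjugation by the successive (finite-index or normalizing) overgroups, to conclude that any such invariant division subring is either central or all of $\M_n(D)$. The latter cannot occur: $\M_n(D)$ is not a division ring when $n\ge2$, and when $n=1$ it would give $C=D$, i.e. $G\subseteq F$, against the hypothesis. Hence $C=F$, and with (a) this yields $F[G]=\M_n(D)$.

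I expect the main obstacles to be exactly the two descent problems created by the finite-index steps, which is precisely what separates the almost subnormal case from the classical subnormal one: controlling complete reducibility under passage to a finite-index subgroup in step (a), and verifying the Cartan--Brauer--Hua invariance across a finite-index step in step (b). In both, it is the algebraicity of all elements over $F$ furnished by local finiteness that renders the otherwise infinite-dimensional algebra $\M_n(D)$ accessible to finite-dimensional representation-theoretic tools.
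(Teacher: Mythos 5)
Your proposal's backbone --- the reduction of the lemma to (a) irreducibility and (b) $C_{\M_n(D)}(G)=F$ --- is where it breaks. The implication ``irreducible $+$ trivial centralizer $\Rightarrow F[G]=\M_n(D)$'' is \emph{not} rescued by finite-dimensional approximation: it is genuinely false for locally finite division rings that are not centrally finite. Sketch of a counterexample: let $F=\Q(x_i,y_i : i\geq 1)$, let $D_i=\bigl(\tfrac{x_i,\,y_i}{F}\bigr)$ be the generic quaternion algebras, so that every finite tensor product $B_k=D_1\otimes_F\cdots\otimes_F D_k$ is a division algebra, and let $D=\bigcup_k B_k$; then $D$ is locally finite with center $F$ and $[D:F]=\infty$. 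The monomials in the standard generators $u_i,v_i$ form an $F$-basis of $D$, indexed by the finite-support vectors of $\bigoplus_{i\geq 1}\mathbb{F}_2^{2}$, and conjugating one monomial by another multiplies it by $(-1)^{\langle v,w\rangle}$, where $\langle\cdot,\cdot\rangle$ is the standard symplectic form. Since this space is infinite dimensional, the form fails to represent some linear functional $\phi$; putting $W=\ker\phi$, one gets a \emph{proper} subspace with $W^{\perp}=0$. The $F$-span $A$ of the monomials indexed by $W$ is a directed union of finite-dimensional domains, hence a proper division subring of $D$ containing $F$, and the eigenvector computation gives $C_D(A)=F$ exactly. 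Now take $n=1$ and $G=A^*$: this $G$ is irreducible (trivially), satisfies $C_D(G)=F$, yet $F[G]=A\neq D$. Of course this $G$ is not almost subnormal --- which is precisely the point: your scheme discards the almost subnormality hypothesis after establishing (a) and (b), but the hypothesis is indispensable in the very step you treat as ``standard''.

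Independently of this, the two places where you do invoke almost subnormality are intentions rather than arguments, and the mechanisms you name do not carry them. In step (a), Clifford's theorem at a normal step yields only complete reducibility and transitivity on homogeneous components; ruling out more than one component (i.e.\ proper reducibility of $G$) is the entire difficulty, and at a finite-index step you would additionally need the normal core to stay non-central --- you flag both issues and resolve neither. In step (b), your claim that $C=C_{\M_n(D)}(G)$ is invariant under conjugation by the \emph{successive} overgroups is false past the first step of the series: $H_2$ normalizes $H_1$, not $G$, so it need not preserve $C$. These missing steps are exactly the deep results that the paper cites instead of reproving: for $n>1$, a non-central almost subnormal subgroup of $\GL_n(D)$ is normal and contains $\SL_n(D)$ \cite[Theorem 3.3]{ngoc_bien_hai_17}, after which $F[G]$ is invariant under conjugation by all of $\GL_n(D)$ and a Cartan--Brauer--Hua-type theorem for invariant subrings \cite[Theorem H]{moghaddam2017} gives $F[G]=\M_n(D)$; for $n=1$, $F[G]$ is a division ring by algebraicity (Lemma~\ref{lemma_2.2}) and the Stuth-type theorem for division subrings normalized by almost subnormal subgroups \cite[Theorem 1]{bien_deo_hai_17} applies. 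Any self-contained proof along your lines would have to reprove theorems of this strength; as written, your outline neither does so nor can its final step be repaired.
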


\begin{proof} For $n>1$, by \cite[Theorem 3.3]{ngoc_bien_hai_17}, $G$ is normal in $\GL_n(D)$. Therefore, $g^{-1}F[G]g\in F[G]$ for any $g\in \GL_n(D)$. In view of \cite[Theorem H]{moghaddam2017}, it follows that $F[G]=M_n(D)$. If $n=1$, then by Lemma \ref{lemma_2.2}, $F[G]$ is a division ring, hence the conclusion holds by \cite[Theorem 1]{bien_deo_hai_17}.
\end{proof}

\begin{theorem} \label{theorem_2.5} Let $D$  be a non-commutative locally finite division ring with center $F$.  Assume that $G$ is an almost subnormal subgroup of $\GL_n(D)$ and $M$ is a non-abelian maximal subgroup of $G$. Then, $M$ contains no non-cyclic free subgroups if and only if $M$ is abelian-by-locally finite. 
\end{theorem}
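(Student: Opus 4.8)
Since the statement is an equivalence, I would prove the two implications separately, and essentially all the difficulty sits in the forward direction.

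The implication ``$M$ abelian-by-locally finite $\Rightarrow$ $M$ has no non-cyclic free subgroup'' is purely group-theoretic, so I would dispatch it first. Let $A\trianglelefteq M$ be abelian with $M/A$ locally finite, and suppose towards a contradiction that $M$ contained a non-cyclic free subgroup $\Phi$. Then $\Phi\cap A$ is a subgroup of the free group $\Phi$, hence itself free by the Nielsen--Schreier theorem, while it is abelian because $\Phi\cap A\leq A$; a free group that is abelian is cyclic, so $\Phi\cap A$ is cyclic. On the other hand $\Phi/(\Phi\cap A)\cong \Phi A/A$ is a finitely generated subgroup of the locally finite group $M/A$, hence finite, so $\Phi\cap A$ has finite index in $\Phi$. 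Since a finite-index subgroup of a non-cyclic free group is again non-cyclic free, this contradicts the cyclicity of $\Phi\cap A$, and no such $\Phi$ can exist.

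For the forward direction, assume $M$ contains no non-cyclic free subgroup. First note $G\not\subseteq F$: otherwise $G$ would be central, hence abelian, forcing its subgroup $M$ to be abelian and contradicting the hypothesis; therefore Lemma~\ref{lemma_2.4} applies and $F[G]=\M_n(D)$, so $G$ is absolutely irreducible. Next, $M$ is a subgroup of $\GL_n(D)$ without non-cyclic free subgroups, so Lemma~\ref{lemma_2.3} gives that $M$ is (locally solvable)-by-(locally finite): there is a normal subgroup $L\trianglelefteq M$ that is locally solvable with $M/L$ locally finite, and I may take $L$ to be the characteristic maximal locally solvable normal subgroup. The whole problem is now to upgrade \emph{locally solvable} to \emph{abelian}: to produce a characteristic abelian subgroup $A\leq L$ with $L/A$ locally finite. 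Once this is done, $A$ is normal in $M$, and $M/A$ is an extension of the locally finite group $L/A$ by the locally finite group $M/L$, hence locally finite, which is exactly the assertion.

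This upgrade is the heart of the matter, and it is genuinely nontrivial: local solvability alone does \emph{not} suffice, since the soluble triangular group $U_2(\C)$ is locally solvable yet not abelian-by-locally finite (here $\C$ is not a locally finite division ring, which is precisely why it does not contradict the theorem). What must rescue us is the local finiteness of $D$ together with the maximality of $M$ inside the absolutely irreducible $G$. My plan is to use these to force the relevant action on $V=D^n$ to be \emph{irreducible}, the point being that the obstruction in $U_2$-type examples is exactly a nontrivial unipotent part, which cannot survive in an irreducible module (and note $D^{*}$ itself has no nontrivial unipotents, being a division ring). I would argue by maximality that either $M$ acts irreducibly, or else the reducible alternative places $M$ inside the stabiliser of a proper nonzero submodule, where a short analysis either exhibits a non-cyclic free subgroup or reduces the degree. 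Granting irreducibility, Clifford's theorem makes $L$ act completely reducibly, and one can then run the structure theory of irreducible locally solvable skew linear groups over a locally finite division ring --- the circle of ideas used by Mahdavi-Hezavehi \cite{free} and by Hai and Tu \cite{hai-tu}, in which a maximal subfield $K$ appears with $K^{*}\cap G$ abelian and normal in $M$ and $M/(K^{*}\cap G)$ embedding into $\Gal(K/F)$ --- to extract the desired characteristic abelian $A$ with locally finite (indeed finite) quotient.

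The step I expect to be the main obstacle is exactly this passage from local solvability to the abelian normal subgroup, that is, eliminating the triangular/unipotent behaviour by establishing irreducibility; this is where the hypotheses that $G$ is almost subnormal (so $F[G]=\M_n(D)$ by Lemma~\ref{lemma_2.4}) and that $M$ is \emph{maximal} must enter essentially, since without them the conclusion fails. A secondary technical point is to guarantee that the abelian subgroup produced can be chosen characteristic in $L$ (hence normal in $M$), so that the final ``locally finite-by-locally finite is locally finite'' argument closes cleanly.
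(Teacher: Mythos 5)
Your backward implication is correct, and slightly more elaborate than the paper's own (the paper just picks generators $x,y$ of a rank-two free subgroup, notes $x^i,y^j$ lie in the abelian normal subgroup for some $i,j\geq 1$, and observes $x^iy^j=y^jx^i$ is impossible in a free group). Your skeleton for the forward implication --- force irreducibility of $M$ from maximality, then upgrade the conclusion of Lemma \ref{lemma_2.3} from (locally solvable)-by-(locally finite) to abelian-by-locally finite --- is also the paper's skeleton. The irreducibility step you only sketch, but it is recoverable along your lines: for $n>1$, by \cite[Theorem 3.3]{ngoc_bien_hai_17} $G$ is normal in $\GL_n(D)$ and contains $\SL_n(D)$; if $M$ were reducible, it would be conjugate into the stabilizer $H$ of a proper nonzero submodule, and maximality forces either $H=G$ (impossible, since $\I_n+\E_{n1}\in\SL_n(D)\subseteq G$ is not block-triangular) or $H=PMP^{-1}$, which then contains the block copy of $\SL_m(D)\subseteq\SL_n(D)\subseteq G$ and hence a non-cyclic free subgroup by \cite[Theorem 4.3]{ngoc_bien_hai_17}. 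Note the dichotomy always ends in a free subgroup; there is no ``degree reduction'' branch.

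The genuine gap is the upgrade step, which you rightly call the heart of the matter but then dispatch by appealing to ``the circle of ideas'' of \cite{free} and \cite{hai-tu}, aiming to produce a maximal subfield $K$ with $K^*\cap G$ abelian normal in $M$ and $M/(K^*\cap G)$ finite. That route is circular in this paper's logical order: the maximal-subfield structure with finite Galois quotient is precisely the content of the main Theorem \ref{theorem_2.9}, whose proof \emph{uses} Theorem \ref{theorem_2.5} (and Theorem \ref{theorem_2.8}); moreover that structure entails $[D:F]<\infty$, which cannot be established at this stage --- here $D$ is merely locally finite, and accordingly the conclusion is only abelian-by-\emph{locally} finite, so your parenthetical ``(indeed finite)'' is not obtainable. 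What actually closes the gap is a tool you never name: since $M$ is irreducible and $\M_n(D)$ is a locally finite-dimensional $F$-algebra, \cite[1.2.4, p.11]{shirvani-wehrfritz} realizes $M$ as an \emph{absolutely} irreducible skew linear group over a locally finite-dimensional division algebra (this is exactly where the hypothesis on $D$ is consumed), and Wehrfritz's structure theorem for locally soluble skew linear groups \cite[Theorem 1.1]{wehrfritz87} then says that the locally solvable normal subgroup $N$ contains an abelian subgroup $A$ that is normal \emph{in $M$} with $N/A$ locally finite; finally $M/A$ is locally finite because an extension of a locally finite group by a locally finite group is locally finite \cite[14.3.1, p.429]{robinson}. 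This also dissolves your ``secondary technical point'': there is no need to arrange $A$ characteristic in $N$ (nor to posit a maximal locally solvable normal subgroup, whose existence is itself unclear), since Wehrfritz's theorem delivers normality in $M$ directly. Without this or an equivalent input, Clifford's theorem and complete reducibility of $N$ alone will not eliminate the triangular-type obstruction you yourself raise.
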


\begin{proof} Assume that $M$ contains no non-cyclic free subgroups.  We claim that $M$ is irreducible. Indeed, since every subgroup of $D^*$ is obviously irreducible, we can assume $n>1$. According to \cite[Theorem 3.3]{ngoc_bien_hai_17}, $\SL_n(D)\subseteq G$ and $G$ is normal in $\GL_n(D)$. Assume by contradiction that $M$ is reducible. In view of \cite[1.1.1, p.2]{shirvani-wehrfritz}, there exist a matrix $P\in\GL_n(D)$ and some integer $0<m<n$ such that $PM{P^{-1}}\subseteq H$, where $$H=\left({\begin{array}{*{20}{c}}{\GL_m(D)}&*\\0&{\GL_{n - m}(D)}\end{array}}\right) \cap G.$$ The normality of $G$ in $\GL_n(D)$ and the maximality of $M$ imply that $PMP^{-1}$ is a maximal subgroup of $G$. Thus, either $H=G$ or $H=PMP^{-1}$. If $H=G$, then $\SL_n(D)\subseteq H$. This is imposible since $\I_n+\E_{n1}$ belongs to $\SL_n(D)$ but it is not an element of $H$ (here $\E_{n1}$ is the matrix with $1$ in the position $(n,1)$ and $0$ everywhere else). So, we may assume that $H=PMP^{-1}$. Because $M$ contains no non-cyclic free subgroups, so does $H$. Consequently, the group $$\left({\begin{array}{*{20}{c}}{\SL_m(D)}& 0\\0&{\I_{n - m}}\end{array}}\right) \subseteq H,$$ which is a copy of $\SL_m(D)$, contains no non-cyclic free subgroups that is a contradiction to \cite[Theorem 4.3]{ngoc_bien_hai_17}. 
Therefore, $M$ is irreducible as claimed. Now, we shall prove that $M$ is abelian-by-locally finite. Since $M$ contains no non-cyclic free subgroups, by Lemma \ref{lemma_2.3}, $M$ is (locally solvable)-by-(locally finite). Let $N$ be a locally solvable normal subgroup of $M$ such that $M/N$ is locally finite. According to \cite[1.2.4, p.11]{shirvani-wehrfritz}, $M$ is isomorphic to an absolutely irreducible skew linear group. By \cite[Theorem 1.1]{wehrfritz87}, $N$ contains an abelian normal  subgroup $A$ of $M$ such that $N/A$ is locally finite. Applying \cite[14.3.1, p.429]{robinson}, we conclude that $M/A$ is locally finite. Thus, $M$ is abelian-by-locally finite. 

Conversely, suppose that $M$ is  abelian-by-locally finite and contains a non-cyclic free subgroup $G_1:=\langle x,y\rangle$, which is generated by $x$ and $y$. Let $H$ be an abelian normal subgroup of $M$ such that $M/H$ is locally finite. Then $G_1H/H$ is  finite. So, there exist integers $i,j\geq1$ such that $x^i,y^j\in H$. Since $H$ is abelian, $x^iy^j=y^jx^i$ that is impossible because $x$ and $y$ are generators of the free group $G_1$.
\end{proof}

Suppose that $n=rk$, for some $k>1$. Let $S$ be a transitive subgroup of the symmetric group $S_k$. Convert each $\sigma\in S$ to a $k\times k$ permutation matrix $\bar t$ in the usual way, and set $t=\I_r\otimes\bar t$. Let $G$ be a subgroup of $\GL_r(D)$. For each $\bar{g}\in G$, take the matrix $\I_k$ and replace the $(1,1)$ entry by  $\bar{g}$, the other diagonal entries by $\I_r$, and the remaining entries by zero matrices. Call this matrix $g$. Then both $t$ and $g$ are elements of $\GL_n(D)$. The subgroup of $\GL_n(D)$ generated by all such matrices $g$ and $t$ is the wreath product of $G$ and $S$, denoted by $G\wr S$. 

\begin{lemma} \label{lemma_2.6} Let $D$  be an infinite division ring with center $F$. Assume that $G$ is an almost subnormal subgroup of $\GL_n(D)$ and $M$ is an irreducible maximal subgroup of $G$. If $M$ is imprimitive, then $n\geq 2$ and there exist positive integers $r, k$ with $k>1$ such that $n=rk$ and $M$ contains a copy of $\SL_r(D)$. 
\end{lemma}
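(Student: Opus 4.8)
The first two assertions will be immediate once the block system is in hand, so I would begin by unwinding the imprimitivity of $M$. There is a decomposition $V=V_1\oplus\cdots\oplus V_k$ into nonzero left $D$-submodules with $k\geq2$ on whose blocks $M$ acts by permutations. Since $M$ is irreducible it permutes the blocks transitively --- otherwise the sum of the blocks in one orbit would be a proper nonzero $M$-invariant $D$-submodule --- and transitivity forces the blocks to be pairwise $D$-isomorphic, hence of a common dimension $r=n/k$. Thus $n=rk$ with $k>1$, and in particular $n\geq2$, which settles the numerical part and fixes $r$ and $k$.

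Next I would pass to the full stabilizer of the block system, $W:=\{g\in\GL_n(D):g\text{ permutes }V_1,\dots,V_k\}$, which in a $D$-basis adapted to the decomposition is exactly the wreath product $\GL_r(D)\wr S_k$ introduced just above. By construction $M\subseteq W$, hence $M\subseteq W\cap G$, and the main point is to promote this containment to an equality. For that I would invoke the structure theory of almost subnormal subgroups: as $M$ is irreducible and $n\geq2$, it is non-scalar, so $G\supseteq M$ is not central, and \cite[Theorem 3.3]{ngoc_bien_hai_17} then yields $\SL_n(D)\subseteq G$ together with the normality of $G$ in $\GL_n(D)$.

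With $\SL_n(D)\subseteq G$ at hand I would verify that $G\not\subseteq W$, equivalently $\SL_n(D)\not\subseteq W$. In the adapted basis, an elementary transvection $\I_n+c\E_{ij}$ with $c\neq0$, the row index $i$ in the first block and the column index $j$ in the second, has Dieudonn\'e determinant $1$ and hence lies in $\SL_n(D)$, yet it carries a basis vector of $V_1$ to a vector with a nonzero component in $V_2$ and so fails to preserve the block system. Therefore $W\cap G$ is a proper subgroup of $G$ containing $M$, and the maximality of $M$ in $G$ forces $M=W\cap G$.

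Finally, to exhibit the required copy of $\SL_r(D)$, I would take the subgroup $L$ of all elements acting as a Dieudonn\'e-determinant-$1$ automorphism on $V_1$ and as the identity on every other block; in the adapted basis, $L=\{\diag(A,\I_r,\dots,\I_r):A\in\SL_r(D)\}\cong\SL_r(D)$. Every element of $L$ fixes each block, so lies in $W$, and its Dieudonn\'e determinant equals that of $A$, namely $1$, so it lies in $\SL_n(D)\subseteq G$; hence $L\subseteq W\cap G=M$, as required. The step I expect to be the main obstacle is the maximality squeeze $M=W\cap G$: it rests entirely on importing $\SL_n(D)\subseteq G$ from the structure theory and on the fact that so large a subgroup cannot stabilize a nontrivial block system, while the only genuinely technical care is the basis-independent use of the Dieudonn\'e determinant, needed both to place the linking transvection in $\SL_n(D)$ and to embed the block-one special linear group into $\SL_n(D)$.
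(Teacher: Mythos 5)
Your proof is correct and takes essentially the same route as the paper: both pass to the stabilizer of the block system (the wreath product $\GL_r(D)\wr S_k$), import $\SL_n(D)\subseteq G$ from \cite[Theorem 3.3]{ngoc_bien_hai_17}, rule out $G$ being contained in that stabilizer via a transvection linking two blocks, and conclude by maximality that $M$ equals the intersection and hence contains the block-one copy of $\SL_r(D)$. The only cosmetic difference is that the paper cites \cite[Lemma 5, p.108]{suprunenko} to conjugate $M$ into the wreath product, whereas you rederive that block structure directly from irreducibility and imprimitivity and work in an adapted basis (which, incidentally, lets you bypass the normality of $G$ that the paper needs to keep the conjugate $M_1$ inside $G$).
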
 

\begin{proof} Assume that $M$ is imprimitive. Then, by definition of imprimitivity, we have $n\geq 2$. By \cite[Theorem 3.3]{ngoc_bien_hai_17}, $G$ is a normal subgroup of $\GL_n(D)$ containing $\SL_n(D)$. By \cite[Lemma 5, p.108]{suprunenko}, there exist positive integers $r, k$ with $k>1$ such that $n=rk$ and $M$ is conjugate to a subgroup, say $M_1$, of $H:=\GL_r(D)\wr S_k$. The normality of $G$ in $\GL_n(D)$ yields $M_1\leq G$, hence $M_1$ is also maximal in $G$. Now, $G\cap H$ is a subgroup of $G$ containing $M_1$, so by maximality of $M_1$ in $G$, either $G\leq H$ or $G\cap H=M_1$. If the first case occurs, then the subgroup $\SL_n(D) $ should be contained in $H$. This is impossible because $\I_n+\E_{n1}$ belongs to $\SL_n(D)$ but it is not an element of $H$. Therefore, $G\cap H=M_1$, which implies that $M_1$ contains a copy of $\SL_r(D)$. Consequently, $M$ contains a copy of $\SL_r(D)$.
\end{proof}

\begin{lemma} \label{lemma_2.7} Let $D$  be a locally finite division ring with center $F$. Suppose that $M$ is a primitive subgroup of $\GL_n(D)$ and $N$ is a normal subgroup of $M$. Then $F[N]\cong \M_t(\Delta)$ for some $t\geq1$ and some locally finite dimensional division $F$-algebra $\Delta$.
\end{lemma}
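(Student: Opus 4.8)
The plan is to run Clifford theory for the normal subgroup $N\trianglelefteq M$ acting on the simple $D$-$M$-bimodule $V=D^n$ (recall that primitive implies irreducible), exploiting throughout that $V$ has finite $D$-dimension $n$. First I would note that any nonzero $D$-$N$-subbimodule of $V$ is in particular a nonzero $D$-subspace, so the finiteness of $\dim_D V=n$ guarantees a minimal nonzero $D$-$N$-subbimodule $W$, which is then simple. Setting $U=\sum_{m\in M}Wm$, normality of $N$ in $M$ makes $U$ a nonzero $D$-$M$-subbimodule, whence $U=V$ by $M$-irreducibility; as each $Wm$ is again a simple $D$-$N$-subbimodule, this exhibits $V|_N$ as a sum of simple subbimodules, hence a completely reducible $D$-$N$-bimodule. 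Collecting the simple summands into isotypic components yields a decomposition $V=\bigoplus_{i=1}^{\ell}V_i$ into $D$-$N$-subbimodules with $\ell\le n$, since each $V_i$ is a nonzero $D$-subspace.

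Next I would invoke primitivity. For $m\in M$, right translation by $m$ carries each simple $D$-$N$-subbimodule to another one (because $N$ is normal), hence carries isotypic components to isotypic components; thus $M$ permutes $\{V_1,\dots,V_\ell\}$, and transitively because $V$ is $M$-irreducible. Each $V_i$ is a left $D$-submodule and $M$ permutes the set $\{V_1,\dots,V_\ell\}$, so if $\ell\ge 2$ this is exactly a system of imprimitivity for $M$, contradicting primitivity. Therefore $\ell=1$, i.e. $V|_N$ is homogeneous: $V\cong W^{(s)}$ as a $D$-$N$-bimodule, for a simple $D$-$N$-bimodule $W$ and some $s\ge1$.

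Finally I would read off the ring structure of $F[N]$. Let $A=\End_D(V)$, a simple Artinian ring ($A\cong\M_n(D)$ up to the usual opposite-ring convention), so that $F[N]\subseteq A$, and let $B=C_A(F[N])$ be its centralizer in $A$; concretely $B=\End_{D\text{-}N}(V)$. Since $V\cong W^{(s)}$ as $D$-$N$-bimodules, Schur's lemma shows that $\Delta:=\End_{D\text{-}N}(W)$ is a division ring and $B\cong\M_s(\Delta)$ is simple Artinian. A double-centralizer argument inside $A$ — legitimate because $V$ is a completely reducible $D$-$N$-bimodule of finite $D$-dimension — then gives $C_A(B)=F[N]$, and computing $C_A(B)$ from $B\cong\M_s(\Delta)$ yields $F[N]\cong\M_t(\Delta)$ with $t=\dim_\Delta W$. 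For the local finiteness of $\Delta$ I would argue that $\M_n(D)$ is itself a locally finite $F$-algebra: the finitely many entries of any finite subset of $\M_n(D)$ lie in a division subring $D_0$ with $[D_0:F]<\infty$, so the $F$-subalgebra they generate is contained in $\M_n(D_0)$ and is finite-dimensional over $F$. Hence the subalgebra $F[N]\cong\M_t(\Delta)$ is locally finite over $F$, and since $\Delta$ embeds as an $F$-subalgebra of $F[N]$, it is a locally finite dimensional division $F$-algebra, as required.

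The main obstacle I anticipate is the double-centralizer step over the noncommutative $D$: one must verify that the commutant $B=C_A(F[N])$ is exactly the simple Artinian ring $\M_s(\Delta)$ and that the second commutant $C_A(B)$ collapses back to $F[N]$ rather than to a strictly larger subalgebra, since it is this collapse — together with the finite $D$-dimension of $V$ — that forces $F[N]$ to be Artinian of the claimed matrix form $\M_t(\Delta)$.
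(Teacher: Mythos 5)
Your Clifford-theory reduction (existence of a simple $D$-$N$-subbimodule, complete reducibility of $V$ over $N$, transitivity of $M$ on the isotypic components, and primitivity forcing a single component $V\cong W^{(s)}$) is correct, as is the Schur-lemma identification $B=C_A(F[N])=\End_{D\text{-}N}(V)\cong\M_s(\Delta)$ with $\Delta=\End_{D\text{-}N}(W)$. Your closing observation --- that $\M_n(D)$ is a locally finite $F$-algebra, so any division ring $\Delta$ occurring in a Wedderburn decomposition $F[N]\cong\M_t(\Delta)$ is automatically locally finite because it embeds diagonally into $\M_t(\Delta)\cong F[N]\subseteq\M_n(D)$ --- is also correct, and is genuinely simpler than the paper's treatment of that half of the lemma, which runs through Amitsur--Levitzki, Amitsur's PI-theorem and a tensor-product analysis. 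The genuine gap is the double-centralizer step, in two respects.

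First, what your computation actually produces is not $C_A(B)$ but $\End_B(V)$, the commutant of $B$ in all of $\End_F(V)$: by density (when it applies) $\End_B(V)$ is the subring generated by the left $D$-action \emph{together with} $F[N]$, and that is what is isomorphic to a matrix ring of size $\dim_\Delta W$ over $\Delta$; the object you need, $C_A(B)=\End_B(V)\cap\End_D(V)$, is strictly smaller whenever $D\neq F$, and its Wedderburn division ring is not $\End_{D\text{-}N}(W)$. Concretely, take $n=1$, $D$ the rational quaternions, and $M=N=D^*$, a primitive subgroup of $\GL_1(D)$: then $W=V=D$ is simple, your $\Delta=\End_{D\text{-}N}(V)=C_D(D^*)=\Q$ and $t=\dim_{\Q}D=4$, so your recipe outputs $F[N]\cong\M_4(\Q)$, whereas $F[N]=D$ is a division ring. (Indeed $\M_4(\Q)\cong D\otimes_{\Q}D^{op}$ is exactly the ring generated by the two actions; no opposite-ring convention repairs this.) Second, even the bicommutant equality $C_A(C_A(F[N]))=F[N]$ is not justified by the finiteness you invoke: the density argument needs $V$ to be finitely generated over $B$, i.e.\ $\dim_\Delta W<\infty$, and this does not follow from $\dim_D V=n<\infty$; it can fail when $[D:F]=\infty$, which is precisely the case the lemma must cover (e.g.\ $n=1$ and $N=K^*$ for a maximal subfield $K$ of an infinite-dimensional locally finite $D$, where $B=C_D(K)=K$ and $\dim_K D$ may be infinite, so your $t$ is not even finite). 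This is why the paper never invokes a bicommutant: it quotes that complete reducibility makes $F[N]$ semisimple Artinian \cite[1.1.12, p.~7]{shirvani-wehrfritz} and that primitivity makes $F[N]$ prime \cite[Proposition~3.3]{dorbidi2011}, concludes that $F[N]$ is simple Artinian, and only then applies Wedderburn--Artin. If you replace your double-centralizer step by those two facts (or by correct proofs of them), the rest of your argument, including your local-finiteness shortcut, does prove the lemma.
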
 
\begin{proof} Since $M$ is irreducible, $N$ is completely reducible, so by \cite[1.1.12, p.7]{shirvani-wehrfritz}, $F[N]$ is a semisimple artinian ring. Moreover, in view of \cite[Proposition 3.3]{dorbidi2011}, $F[N]$ is a prime ring. Therefore, $F[N] $ is a simple artinian ring. By the Wedderburn-Artin Theorem,  $F[N] \cong \M_{t}(\Delta)$ for some $ t \geq 1$ and some division $F$-algebra $\Delta$. Let $F_0=Z(\Delta)$ and $S$ be any finite subset of $\Delta$. We claim that $\Delta$ is locally finite. To do this, we must show that $\Delta_2=F_0(S)$, the division subring of $\Delta$ generated by $S$ over $F_0$, is finite dimensional over $F_0$. First, we will prove that $[\Delta_2:F_2]<\infty$, where $F_2=Z(\Delta_2)$. Let $D_1$ be the division subring of $D$ generated by all entries of all elements of $S$ over $F$.  Denoting $r:=[D_1:F]$, we have $$F[S]\leq\M_n(D_1)\leq\M_{nr}(F).$$ 
In view of the Amitsur-Levitzki Theorem,  $F[S]$ satisfies the standard polynomial identity. By \cite[Theorem 1]{amitsur}, the division subring $\Delta_1:=F(S)$ of $\Delta$ is finite dimensional over its center $F_1$. Set $m=[\Delta_1:F_1]$. Since $F\subseteq F_0$, we have $\Delta_1\subseteq\Delta_2$. It is easy to check that  $F_2=C_{\Delta_2}(\Delta_1)$ is a field which contains $F_1$ and $F_1=\Delta_1\cap F_2$. By setting $R=\Delta_1\otimes_{F_1}F_2$, we have $Z(R)=F_2$ and $$R\leq (\Delta_1\otimes_{F_1}\Delta_1^{op})\otimes_{F_1}F_2\cong\M_m(F_1)\otimes_{F_1}F_2.$$ 
Note that by \cite[Corollary 6, p.42]{draxl}, both $R$ and $\M_m(F_1)\otimes_{F_1}F_2$ are  central simple $F_2$-algebras, hence $[R:F_2]<\infty$. Since $R$ is simple, the map $$R=\Delta_1\otimes_{F_1}F_2\to \Delta_2:x\otimes y\mapsto xy$$ is an injective homomorphism. Therefore, $R$ is a domain and $\dim_{F_2}R<\infty$, which implies that $R$ is a division ring. Moreover, $R$ contains $F_0$ and $S$, so $R=\Delta_2$ and $k:=[\Delta_2:F_2]<\infty$. 

Next, we will show that $[F_2:F_0]<\infty$. Suppose that $\{a_1,a_2,\dots, a_k\}$ is a basis of $\Delta_2$ over $F_2$ and $S=\{b_1,b_2,\dots,b_l\}$. For $1\leq i,j\leq k$ and $1\leq s\leq l$, write $a_ia_j=c_{ij1}a_1+c_{ij2}a_2+\cdots+c_{ijk}a_k$, $c_{ij\alpha}\in F_2$ and $b_s=d_{s1}a_1+d_{s2}a_2+\cdots+d_{sk}a_k$, $d_{s\gamma}\in F_2$. Set $$\Omega=\{c_{ij\alpha}:1\leq i,j,\alpha\leq k\}\cup\{d_{s\gamma}:1\leq s\leq l, 1\leq \gamma\leq k\}.$$ Let $K$ be the subfield of $F_2$ generated by $\Omega$ over $F_0$, and let $$\Delta_3=\left\lbrace \sum\limits_{i = 1}^k\alpha_ia_i:\alpha_i\in K\right\rbrace.$$ Then, $\Delta_3$ is a domain and $\dim_K\Delta_3<\infty$. Consequenly, $\Delta_3$ is a division ring containing both $S$ and $F_0$, which yields $\Delta_3=\Delta_2$ and $[F_2:K]<\infty$. Since $K$ is finitely generated over $F_0$, so is $F_2$. In view of \cite[Theorem 8.4.1, p.387]{cohn}, $\M_n(D)$ is algebraic over $F$. Thus, $F_2$ is algebraic over $F_0\supseteq F$. It follows that $[F_2:F_0]<\infty$. Finally, the conditions $[\Delta_2:F_2]<\infty$ and $[F_2:F_0]<\infty$ imply that $[\Delta_2:F_0]<\infty$. 
\end{proof}
It is well-known that the existence of maximal subgroups in division rings is one of the difficult open problems in the theory of skew linear groups. The result obtained in the following theorem contains some useful information for the study of this problem. Some related results can be found in  \cite{dbh} and \cite{free}.

\begin{theorem} \label{theorem_2.8} Let $D$  be a non-commutative locally finite division ring with center $F$. Assume that $G$ is an almost subnormal subgroup of $\GL_n(D) $ and $M$ is a non-abelian maximal subgroup of $G$. If  $M$  is radical over $F$ (that is, for every element $x\in M$, there exists an integer $n(x)$ depending on $x$ such that $x^{n(x)}\in F$), then  $[D:F] < \infty$ and $M$ is abelian-by-finite. 
\end{theorem}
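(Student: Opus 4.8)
The plan is to translate the radical hypothesis into the free‑subgroup dichotomy of Theorem~\ref{theorem_2.5}, to force $M$ into the primitive case, and then to extract the central finiteness of $D$ from the interplay between the maximality of $M$ and the local finiteness of $D$. First I would note that $M$ has no non-cyclic free subgroup: if $\langle x,y\rangle\le M$ were free of rank two, then $x^{n(x)}\in F$ is a scalar matrix and hence central in $\GL_n(D)$, so it commutes with $y$; but the centre of a non-cyclic free group is trivial, a contradiction. Thus Theorem~\ref{theorem_2.5} applies and $M$ is abelian-by-locally finite, and, repeating the opening of the proof of that theorem verbatim, $M$ is irreducible. Fix an abelian normal subgroup $A\trianglelefteq M$ with $M/A$ locally finite, chosen maximal among abelian normal subgroups of $M$.

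Next I would show that $M$ is primitive. If not, then since $D$ is infinite (by Remark~\ref{rem:1} the centre $F$ is not locally finite, so $F^*$ contains an element $u$ of infinite order), Lemma~\ref{lemma_2.6} produces $n=rk$ with $k>1$ and a block embedding of $\SL_r(D)$ into $M$ of the form $\bar g\mapsto\diag(\bar g,\I_r,\dots,\I_r)$. As $M$ is radical over $F$, some power of each such matrix equals a scalar $\lambda\I_n$; comparing the blocks off the first one forces $\lambda=1$, so that power equals $\I_n$ and the copy of $\SL_r(D)$ is a torsion group. For $r\ge2$ this is impossible, since $\diag(u,u^{-1},\I_{r-2})\in\SL_r(D)$ has infinite order; for $r=1$ the copy is the derived subgroup $[D^*,D^*]$, a torsion normal subgroup of $D^*$, which by the standard theorem on torsion subnormal subgroups of division rings must be central, forcing $D$ to be commutative. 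Both outcomes contradict the hypotheses, so $M$ is primitive. Applying Lemma~\ref{lemma_2.7} to $A$, the algebra $F[A]$ is simultaneously commutative (as $A$ is abelian) and of the form $\M_t(\Delta)$; hence $t=1$ and $K:=F[A]=F(A)$ is a field, algebraic over $F$. Conjugation makes every element of $M$ act on $K$ as an $F$-automorphism, so that $M/C_M(A)$ embeds into $\Aut(K/F)$ and is locally finite, and the Clifford-theoretic analysis of the primitive case (as in \cite{free},\cite{hai-tu}) shows $K$ is a maximal subfield with $C_M(A)=M\cap K^*$ abelian.

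The crux, and what I expect to be the main obstacle, is to prove $[D:F]<\infty$; granting this the theorem follows at once, for then $K\subseteq\M_n(D)$ is finite over $F$, so $\Aut(K/F)$ is finite, $M/(M\cap K^*)$ is finite, and, $M\cap K^*$ being abelian, $M$ is abelian-by-finite. The difficulty is that local finiteness by itself is of no help here: every finitely generated subdivision ring of $D$ is already finite over $F$, yet $D$ could a priori be infinite-dimensional. The radical hypothesis must therefore be used to make $D$ finitely generated over $F$. Concretely, I would take non-commuting $a,b\in M$, form the finite-dimensional $F$-subalgebra $F[a,b]$ (finite over $F$ by local finiteness), and exploit the maximality of $M$ in $G$, arguing with the $G$-normalizer of the subring, or of the maximal subfield, that these elements determine, in the spirit of the proofs of Lemmas~\ref{lemma_2.6} and~\ref{lemma_2.7}, to conclude that this finitely generated piece already exhausts the relevant division ring; local finiteness then yields $[D:F]<\infty$. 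This step is genuinely delicate precisely because $M$ is only maximal, not subnormal, in $G$: the clean implication ``subnormal and radical over $F$ $\Rightarrow$ central'' that disposes of the imprimitive case is unavailable for $M$ itself and must be replaced by an argument that leverages maximality, along the lines of the $n=1$ analysis in \cite{free} and \cite{hai-tu}.
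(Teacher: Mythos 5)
Your opening reductions are sound: the observation that radicality over $F$ excludes non-cyclic free subgroups (a nontrivial power of a generator would be central in $\GL_n(D)$, but no nontrivial power of a generator is central in a free group of rank two) is correct, so Theorem~\ref{theorem_2.5} applies and gives that $M$ is irreducible and abelian-by-locally finite; your primitivity argument, which makes the embedded copy of $\SL_r(D)$ torsion by comparing diagonal blocks and then excludes $r\geq 2$ by an element $\diag(u,u^{-1},\I_{r-2})$ of infinite order and $r=1$ by the theorem on torsion normal subgroups of division rings, is also correct and is a legitimate alternative to the paper's citation of \cite[Theorem 4.3]{ngoc_bien_hai_17}. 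But from that point on there is a genuine gap: the central assertion of the theorem, $[D:F]<\infty$, is never proved --- you explicitly flag it as the main obstacle and offer only an intention. Moreover, the strategy you sketch cannot work as stated. Taking non-commuting $a,b\in M$ and the subring $F[a,b]$, the maximality of $M$ in $G$ gives you nothing, because the dichotomy ``$N_G(R^*)=M$ or $N_G(R^*)=G$'' is available only for subrings $R$ that $M$ normalizes, i.e.\ $R=F[N]$ with $N\unlhd M$; $M$ does not normalize $F[a,b]$, so neither horn of the dichotomy can be forced, and there is no reason why $F[a,b]$ should ``exhaust'' $\M_n(D)$. (Likewise, your intermediate claim that $K=F[A]$ is a maximal subfield with $C_M(A)=M\cap K^*$ is only asserted by reference to \cite{free} and \cite{hai-tu}; in the paper such statements are consequences of the whole analysis, not inputs to it.)

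What the paper actually does, and what is missing from your proposal, is this. First, radicality is used quantitatively, not merely to exclude free subgroups: for $x\in M'$ with $x^m=a\in F$, a norm computation $1=N_{\M_n(D_1)/F}(x)^m=N_{\M_n(D_1)/F}(a)=a^{n^2t}$ shows $M'$ is torsion, hence locally finite by Schur's theorem. Next, $F[M]=\M_n(D)$ is established via the maximality dichotomy and Lemma~\ref{lemma_2.4}, and then the normalizer dichotomy is run on genuinely $M$-normal subgroups: $F[M']^*$, and $F[Core_M(C_M(x))]^*$ for suitable $FC$-elements $x$. In each branch, \cite[Theorem 4.3]{ngoc_bien_hai_17} (an almost subnormal subgroup over a locally finite division ring with non-locally-finite center either contains a free subgroup or is central) together with Lemma~\ref{lemma_2.4} forces $M$, or a finite-index normal subgroup of it, to be abelian. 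Finally --- and this is the engine you are missing --- the implication ``$M$ abelian-by-finite $\Rightarrow [D:F]<\infty$'' comes from PI theory: by Passman's lemma \cite[Lemma 11, p.176]{passman_77} the group ring $FM$ of an abelian-by-finite group is a PI-ring, hence so is its homomorphic image $F[M]=\M_n(D)$, and Kaplansky's theorem \cite{kap} gives $[D:F]<\infty$. Note the direction: the paper deduces central finiteness \emph{from} abelian-by-finiteness, whereas you attempt to prove $[D:F]<\infty$ first and then derive abelian-by-finiteness; this reversal is precisely why your crux looks intractable. The paper also needs a separate characteristic-$p$ argument (via a maximal locally finite subfield $\tilde{F}$ of $F$ and $\tilde{F}[M']\cong\M_l(\tilde{F})$) to close one case, which your proposal does not address at all.
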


\begin{proof} For any $x\in M'$, assume that $x=[x_1,y_1]^{z_1}\ldots [x_k,y_k]^{z_k}$, where $z_i$'s are integers, and $x^m=a\in F$ for some integer $m\geq 1$. Let $D_1$ be the division subring of $D$ generated by all entries of  $x_1, y_1, \cdots, x_k, y_k $ over $F$. Since $D$ is locally finite, $D_1$ is finite dimensional over $F$. By setting $t=[D_1:F]<\infty$ and viewing $x$ as an element of $\M_n(D_1)$, we have $$1=N_{\M_n(D_1)/F}(x)^m=N_{\M_n(D_1)/F}(x^m)=N_{\M_n(D_1)/F}(a)=a^{n^2t},$$ so $x^{mtn^2}=1$. Therefore $M'$ is torsion. Because $D$ is a locally finite division ring, $M'$ is a locally  linear group, that is, every finite subset of $M'$ generates a subgroup which is isomorphic to a linear group over a field. According to Schur's theorem \cite[Theorem 9.9', p.146]{lam}, $M'$ is locally finite. 

In the proof of Theorem \ref{theorem_2.5} we see that $M$ is irreducible. Therefore, according to \cite[1.2.4, p.11]{shirvani-wehrfritz}, $F[M]\cong \M_{t_1}(\Delta_1)$ for some $t_1\geq1$ and some locally finite dimensional division $F$-algebra $\Delta_1$. The maximality of $M$ in $G$ yields either $F[M]^*\cap G=M$ or $G\subseteq F[M]^*$.  If $F[M]^*\cap G=M$, then $M$ is a non-abelian almost subnormal subgroup of $\GL_{t_1}(\Delta_1)$ that is radical over $F$. This contradicts to \cite[Theorem 4.3]{ngoc_bien_hai_17}, which forces $G \subseteq F[M]$.  Applying Lemma \ref{lemma_2.4}, we conclude that $F[M]=\M_n(D)$. Hence, $Z(M)=M\cap F$.

We claim that $M$ is primitive. Indeed, if $M$ is imprimitive, then according to Lemma \ref{lemma_2.6}, $M$ contains a copy of $\SL_r(D)$ for some integer $r\geq 1$. By \cite[Theorem~4.3]{ngoc_bien_hai_17}, $M$ contains a non-cyclic free subgroup, which contradicts to the fact that $M$ is radical over $F$. Thus $M$ is primitive as claimed. By Lemma \ref{lemma_2.7},  $F[M'] \cong \M_{t_2}(\Delta_2)$ for some $ t_2 \geq 1$ and some locally finite dimensional division $F$-algebra $\Delta_2$. Since $M$ is maximal in $G$ and $M \subseteq N_G(F[M']^*) \subseteq G$, either $M=N_G(F[M']^*)$ or $N_G(F[M']^*)=G$. Let us consider two possible cases:

\bigskip
 
\textit{Case 1: $M=N_G(F[M']^*)$}  

\bigskip

In this case, $G\cap F[M']^*$ is an almost subnormal subgroup of $\GL_{t_2}(\Delta_2)$ contained in $M$. By Remark \ref{rem:1}, $F$ is not a locally finite field, so \cite[Theorem 4.3]{ngoc_bien_hai_17} yields  $M'\subseteq G\cap F[M']^*\subseteq Z(\Delta_2)$. Consequenly, $M$ is metabelian, hence, it is solvable. Since $M$ is irreducible, its unique maximal unipotent normal subgroup $u(M)=1$ (see \cite[2.4]{wehrfritz86}). We now claim that there exists an $FC$-element $x\in M\backslash Z(M)$. For, if $M$ is nilpotent, then by \cite[3.3.1, p.95]{shirvani-wehrfritz}, every element of $M\backslash Z(M)$ is $FC$-element. Otherwise, again by \cite[3.3.1, p.95]{shirvani-wehrfritz}, every element of $M'\backslash Z(M)$ is $FC$-element. If  $A=Core_M(C_M(x))$, then $A$ is a normal subgroup of finite index in $M$. By Lemma \ref{lemma_2.7}, $F[A]\cong \M_{t_3}(\Delta_3)$ for some $t_3\geq1$ and some locally finite dimensional division $F$-algebra $\Delta_3$. Now, the condition $M\subseteq N_G(F[A]^*)\subseteq G$ implies that either $N_G(F[A]^*)=M$ or $N_G(F[A]^*)=G$. If $N_G(F[A]^*)=M$, then $G\cap F[A]^*$ is an almost subnormal subgroup of $\GL_{t_3}(\Delta_3)$ contained in $M$. Again, \cite[Theorem~4.3]{ngoc_bien_hai_17} forces $A\subseteq G\cap F[A]^*\subseteq Z(\Delta_3)$. In orther words, $M$ is abelian-by-finite. Now, in view of \cite[Lemma 11, p.176]{passman_77}, the group ring $FM$ is a PI-ring. Viewing $F[M]$ as a homomorphic image of $FM$, it follows that $F[M]=\M_n(D)$ is also a PI-ring. By Kaplansky's theorem (\cite[Theorem 1]{kap}), $[D:F]<\infty$.  In the case $N_G(F[A]^*)=G$, the group $G\cap F[A]^*$ is almost subnormal in $\GL_n(D)$. If $G\cap F[A]^*$ is contained in $F$, then $A$ is abelian. Consequently, $M$ is abelian-by-finte, and again $[D:F]<\infty $. If $G\cap F[A]^*\not\subseteq F$, then Lemma \ref{lemma_2.4} gives $F[A]=\M_n(D)$. Therefore, all elements of $\M_n(D)$ commute with $x$, or equivalently, $x\in F\cap M=Z(M)$, a contradiction.

\bigskip
 
\textit{Case 2: $N_G(F[M']^*)=G$}
	
\bigskip
	
In this case, $G\cap F[M']^*$ is almost subnormal in $\GL_n(D)$. If $G\cap F[M']^*\subseteq F$, then $M'$ is abelian. By the same argument used in Case 1, it follows that  $[D:F]<\infty$ and $M$ is abelian-by-finite. Assume that $G\cap F[M']^*$ is not contained in $F$.  By Lemma \ref{lemma_2.4}, it follows  that $F[M']=\M_n(D)$.  Now, let us consider the following two subcases:

\bigskip
  
\textit{Subcase 2.1: $\Char(F)=0$} 

\bigskip

By \cite[2.5.14, p.79]{shirvani-wehrfritz}, $M'$ contains a characteristic metabelian subgroup $H$ of finite index in $M'$. Consequently, $H'$ is an abelian normal subgroup of $M$. In view of Lemma \ref{lemma_2.7}, $K:=F[H']$ is a field that is algebraic over $F$. 

Firstly, we assume that $H'\not\subseteq F$, then there exists an element $y\in H'\backslash Z(M)$ because $Z(M)=M\cap F$ as we have proved above. Thus, the elements of the set $y^M:=\{m^{-1}ym\vert m\in M\}\subset K$ have the same minimal polynomial over $F$. This implies $|y^M|<\infty$, so $y$ is an $FC$-element, and consequently, $[M:C_M(y)]<\infty$. Setting $B=Core_M(C_M(y))$, then $B\unlhd M$ and $[M:B]$ is finite. By Lemma \ref{lemma_2.7}, $F[B]\cong\M_{t_4}(\Delta_4)$ for some $t_4\geq1$ and some locally finite dimensional division $F$-algebra $\Delta_4$. The maximality of $M$ in $G$ implies that either $N_G(F[B]^*)=M$ or $N_G(F[B]^*)=G$. If the first case occurs, then $G\cap F[B]^*$ is almost subnormal in $\GL_{t_4}(\Delta_4)$ contained in $M$. By \cite[Theorem 4.3]{ngoc_bien_hai_17}, $B\subseteq G\cap F[B]^*\subseteq Z(\Delta_4)$. Consequently, $M$ is abelian-by-finite and $[D:F]<\infty$. If $N_G(F[B]^*)=G$, then $G\cap F[B]^*$ is almost subnormal in $\GL_n(D)$. If $G\cap F[B]^*$ is contained in $F$, then $B$ is abelian. Consequently, $M$ is abelian-by-finite and $[D:F]<\infty$. If $G\cap F[B]^*\not\subseteq F$, then by Lemma \ref{lemma_2.4}, $F[B]=\M_n(D)$. This implies that all elements of $\M_n(D)$ commute with $y$, or equivalently, $y\in F\cap M=Z(M)$, a contradiction. 

Now, assume that $H'\subseteq F$. In this case, $H$ is nilpotent and locally finite. According to  \cite[2.5.2, p.73]{shirvani-wehrfritz}, $H$ contains an abelian normal subgroup of finite index. Thus,  $M'$ is abelian-by-finite. Again by  \cite[Lemma 11, p.176]{passman_77}, $F[M']=\M_n(D)$ is a PI-ring, which shows that $[D:F]<\infty$. Now, $M$ may be viewed as a linear group containing no non-cyclic free subgroups. By Tits' theorem \cite{tits}, $M$ is solvable-by-finite. Let $A$ be a solvable normal subgroup of finite index in $M$. Applying  \cite[Theorem 6.4, p.111]{dixon}, we deduce that $A$ is abelian-by-finite. Consequently, $M$ is also abelian-by-finite. 

\bigskip
    
\textit{Subcase 2.2: $\Char(F)=p>0$} 

\bigskip
Let $\mathbb{F}_p$ be the prime subfield and $\tilde{F}$ be a maximal locally finite subfield of $F$. By \cite[1.1.14, p.9]{shirvani-wehrfritz}, $\tilde{F}[M']\cong \M_l(S)$ for some $l\geq1$ and some division ring $S$. Since $M'$ is locally finite, $S$ is algebraic over $\tilde{F}$, so $S$ is algebraic over $\mathbb{F}_p$.  By Jacobson's theorem, $S$ is a field. Moreover, $$S=Z(\tilde{F}[M'])\subseteq Z(F[M'])=Z(\M_n(D))=F.$$ 
Now, we have $S=\tilde{F}$ because $S$ is a locally finite subfield of $F$ containing $\tilde{F}$. Therefore, $\tilde{F}[M']\cong \M_l(\tilde{F})$, so $[\tilde{F}[M']:\tilde{F}]=l^2$. By the same argument as in the case $F[M']=\M_n(D)$, we deduce that $F[\tilde{F}[M']]=\M_n(D)$. We claim that $[M_n(D):F]\leq l^2$. Indeed, suppose that $\{u_1,\cdots,u_{l^2}\}$ is a basis for $\tilde{F}[M']$ over $\tilde{F}$. Any element $x\in \M_n(D)$ may be written in the form $$\begin{gathered}
  x=f_1{x_1}+\cdots f_kx_k ,\;\;f_i\in F, x_i\in \tilde{F}[M'] \hfill \\
   \;\;\;= f_1(\alpha_{11}u_1+\cdots+\alpha_{1l^2}u_{l^2})+\cdots+f_k(\alpha _{k1}u_1+\cdots+\alpha_{kl^2}u_{l^2}) \;\ \hfill \\
   \;\;\;= (f_1\alpha_{11}+\cdots+f_k\alpha_{k1})u_1+\cdots+(f_1\alpha_{1l^2}+\cdots+f_k\alpha_{kl^2})u_{l^2}, \alpha_{ij}\in\tilde{F}. \hfill \\ 
\end{gathered} $$ Since $\tilde{F}\subseteq F$, the last equation implies that the elements  $u_1,\cdots,u_{l^2}$ span $\M_n(D)$ over $F$. Therefore, $[M_n(D):F]\leq l^2$ as claimed, and thus $[D:F]\cdot n^2\leq l^2$. The condition $M_l(\tilde{F})\subseteq \M_n(D)$ together with \cite[1.1.9, p.5]{shirvani-wehrfritz} shows that $l\leq n$. Hence, the inequality $[D:F]\cdot n^2\leq l^2$ implies that $n=l$ and $D=F$, which is a contradiction.
\end{proof}

\section{Main result}

Now, we are ready to prove the main result of this paper.

\begin{theorem} \label{theorem_2.9} Let $D$  be a non-commutative locally finite division ring with center $F$. Assume that $G$ is an almost subnormal subgroup of $\GL_n(D) $ and $M$ is a non-abelian maximal subgroup of $G$. If $M$ contains no non-cyclic free subgroups then $[D:F]<\infty$, $F[M]=\M_n(D)$, and there exists a maximal subfield $K$ of $\M_n(D)$  containing $F$ such that $K/F$ is a Galois extension, $N_G(K^*)=M $, $K^*\cap G\unlhd M$, $M/K^*\cap G\cong\Gal(K/F)$ is a finite simple group, and $K^*\cap G$ is the Fitting subgroup of $M$.
\end{theorem}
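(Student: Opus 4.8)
The plan is to read off the structural consequences already packaged in the auxiliary results, then to force $[D:F]<\infty$, and only afterwards to manufacture the maximal subfield together with its Galois action. I first record that, by the proof of Theorem~\ref{theorem_2.5}, $M$ is irreducible, and by Theorem~\ref{theorem_2.5} itself $M$ is abelian-by-locally finite. Since $M$ is non-abelian, $M\not\subseteq F$, so arguing exactly as in the opening of the proof of Theorem~\ref{theorem_2.8} — writing $F[M]\cong\M_{t_1}(\Delta_1)$ by \cite[1.2.4, p.11]{shirvani-wehrfritz}, using the maximality dichotomy $F[M]^*\cap G=M$ or $G\subseteq F[M]^*$, and excluding the first alternative by \cite[Theorem 4.3]{ngoc_bien_hai_17} — I would obtain $G\subseteq F[M]$ and hence, via Lemma~\ref{lemma_2.4}, $F[M]=\M_n(D)$; this gives $Z(M)=M\cap F$. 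As in Theorem~\ref{theorem_2.8}, imprimitivity of $M$ would put a copy of $\SL_r(D)$ inside $M$ by Lemma~\ref{lemma_2.6} (recall $D$ is infinite), contradicting \cite[Theorem 4.3]{ngoc_bien_hai_17} and the absence of non-cyclic free subgroups, so $M$ is primitive.

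The main obstacle is the second step: upgrading abelian-by-locally finite to abelian-by-finite and deducing $[D:F]<\infty$, i.e.\ turning local finiteness into genuine finiteness. I would produce $FC$-elements as in Subcase~2.1 of Theorem~\ref{theorem_2.8}: choosing an abelian normal $A_0\unlhd M$ with $M/A_0$ locally finite, the ring $F[A_0]$ is a field (Lemma~\ref{lemma_2.7} together with commutativity) that is algebraic over $F$ since $\M_n(D)$ is algebraic over $F$ by \cite[Theorem 8.4.1, p.387]{cohn}; hence each $y\in A_0$ has all its $M$-conjugates sharing a single minimal polynomial over $F$, so $y^M$ is finite, $y$ is an $FC$-element, and $[M:C_M(y)]<\infty$. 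Feeding the algebras $F[\,Core_M(C_M(y))\,]$ through the recurring dichotomy $N_G(F[\cdot]^*)=M$ or $G$ — resolved by Lemma~\ref{lemma_2.7}, Lemma~\ref{lemma_2.4} and \cite[Theorem 4.3]{ngoc_bien_hai_17} in each branch, exactly as in the case analysis of Theorem~\ref{theorem_2.8} — I expect every branch to yield that $M$ is abelian-by-finite. Then \cite[Lemma 11, p.176]{passman_77} makes the group ring $FM$, and hence its homomorphic image $F[M]=\M_n(D)$, a PI-ring, so Kaplansky's theorem \cite[Theorem 1]{kap} forces $[D:F]<\infty$.

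With $\M_n(D)$ now a finite-dimensional central simple $F$-algebra, I would fix a maximal abelian normal subgroup $A$ of $M$ containing $Z(M)=M\cap F$ and an abelian normal subgroup of finite index, and set $K:=F[A]$, which is a subfield of $\M_n(D)$ finite over $F$ by Lemma~\ref{lemma_2.7} and commutativity. The essential internal claim — and the second technical crux — is that $K$ is a maximal subfield, equivalently that $A$ is self-centralizing, $C_M(A)=A$. I would prove this by applying the dichotomy $N_G(F[C_M(A)]^*)=M$ or $G$ to the normal subgroup $C_M(A)\unlhd M$: the branches compatible with \cite[Theorem 4.3]{ngoc_bien_hai_17}, Lemma~\ref{lemma_2.4} and the double-centralizer theorem force $F[C_M(A)]$ to be commutative, so $C_M(A)$ is abelian and equals $A$ by maximality, while the degenerate possibility $C_{\M_n(D)}(K)=\M_n(D)$ (that is $K=F$, which makes $M$ center-by-finite) must be excluded separately using that $M$ is non-abelian and $F[M]=\M_n(D)$ is non-commutative. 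Granting $C_{\M_n(D)}(K)=K$, conjugation gives $M\to\Aut(K/F)$ with kernel $C_M(K)=K^*\cap M=A$; since $F[M]=\M_n(D)$, the fixed field of the image is $K\cap C_{\M_n(D)}(M)=K\cap F=F$, so by Artin's theorem $K/F$ is Galois with $\Gal(K/F)\cong M/A$. As $K^*$ is abelian, $A=K^*\cap M$ is an abelian normal subgroup of finite index, and the same dichotomy applied to the Fitting subgroup identifies $A$ with the Fitting subgroup of $M$.

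Finally $M\subseteq N_G(K^*)\subseteq G$, so maximality gives $N_G(K^*)\in\{M,G\}$; the value $G$ is impossible, since then $G\cap K^*=A$ would be almost subnormal of finite index in $G$ and Lemma~\ref{lemma_2.4} would give $F[A]=\M_n(D)\neq K$, while the alternative $A\subseteq F$ would make the non-central almost subnormal group $G$ center-by-finite, contradicting that it contains a non-cyclic free subgroup by \cite[Theorem 4.3]{ngoc_bien_hai_17}; hence $N_G(K^*)=M$, whence $K^*\cap G=K^*\cap M=A$. For simplicity of $\Gal(K/F)$, a proper nontrivial normal subgroup corresponds under the Galois correspondence to an intermediate field $F\subsetneq L\subsetneq K$ with $L/F$ Galois; letting $\widetilde H\unlhd M$ be the preimage of $\Gal(K/L)$, the algebra $F[\widetilde H]$ is the crossed product of $K$ by $\Gal(K/L)$, central simple over $L$. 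Running $N_G(F[\widetilde H]^*)=M$ or $G$ one more time, the first case forces $\widetilde H\subseteq L^*$ and hence $L=K$, and the second forces either $L=K$ or $C_{\M_n(D)}(L)=\M_n(D)$, i.e.\ $L=F$; both contradict $F\subsetneq L\subsetneq K$. Therefore $M/(K^*\cap G)\cong\Gal(K/F)$ is a finite simple group, which completes the proof.
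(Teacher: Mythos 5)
Your proposal follows the paper's skeleton (the maximality dichotomies resolved by Lemma~\ref{lemma_2.4}, Lemma~\ref{lemma_2.7} and \cite[Theorem 4.3]{ngoc_bien_hai_17}, then PI plus Kaplansky, then Artin's theorem), but it has a genuine gap at exactly the step you call the main obstacle. Your $FC$-element mechanism only produces anything when the abelian normal subgroup $A_0$ (with $M/A_0$ locally finite) contains a \emph{non-central} element: for $y\in A_0\setminus F$ the branch $F[\mathrm{Core}_M(C_M(y))]=\M_n(D)$ of the dichotomy is contradicted by $y\notin F$, and the other branches give abelian-by-finite. But nothing you say rules out that \emph{every} such $A_0$ lies in $F$. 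In that case each $y\in A_0$ is central, $C_M(y)=M$, the core is $M$ itself, $F[M]=\M_n(D)$, and the dichotomy is vacuous; no branch yields a contradiction or a finite-index abelian subgroup, so ``I expect every branch to yield that $M$ is abelian-by-finite'' fails in precisely the hard branch. Note that $A_0\subseteq F$ with $M/A_0$ locally finite means exactly that $M$ is radical over $F$; this is why the paper proves Theorem~\ref{theorem_2.8} as a separate result (whose proof carries the real weight: the reduced-norm argument making $M'$ torsion, Schur's theorem, and the delicate characteristic $p>0$ subcase with maximal locally finite subfields) and invokes it as Case~2 of the proof of Theorem~\ref{theorem_2.9}, afterwards replacing $A$ by a non-central abelian normal subgroup $C$ of finite index and re-running Case~1. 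You borrow techniques from the proof of Theorem~\ref{theorem_2.8} but never invoke the theorem itself, and the borrowed techniques do not survive the transplant, since in Theorem~\ref{theorem_2.8} they rest on radicality (to make $M'$ locally finite), which you do not have a priori.

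Two further gaps. First, ``$K$ is a maximal subfield, equivalently $C_M(A)=A$'' is not an equivalence: $C_M(A)=A$ identifies $\ker\psi$ and, via Artin, gives $K/F$ Galois with group $M/A$, but it puts no bound on $C_{\M_n(D)}(K)$, which a priori is a matrix algebra $\M_t(\Delta)$ with center $K$ strictly larger than $K$. Maximality still requires the paper's counting step: a transversal of $K^*\cap G$ in $M$ spans $\M_n(D)$ over $K$, so $[\M_n(D):K]_r\leq |M/K^*\cap G|=[K:F]$, and the Centralizer Theorem then forces $C_{\M_n(D)}(K)=K$; your sketch omits this count entirely (though once Galois-ness is in hand it could be added). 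Second, the degenerate case $A\subseteq F$, i.e.\ $M$ center-by-finite, cannot be ``excluded separately using that $M$ is non-abelian and $F[M]=\M_n(D)$ is non-commutative'': that configuration is not formally contradictory (e.g.\ $\mathbb{R}^*Q_8$ inside the real quaternions is non-abelian, center-by-finite, and generates the whole algebra), and the paper needs a real argument here -- adjoin $x\in G\setminus M$ to a transversal of $Z(M)$, observe that the resulting finitely generated group is normal in $G$, hence almost subnormal in $\GL_n(D)$, and apply \cite[Theorem 5.4]{ngoc_bien_hai_17}. Moreover your claim that $A\subseteq F$ makes ``$G$ center-by-finite'' is wrong: it makes $M$, not $G$, center-by-finite, so the appeal to $G$ containing a free subgroup produces no contradiction. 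On the positive side, your simplicity argument via the Galois correspondence and crossed products is a workable alternative to the paper's transversal count for $M_1\supsetneq K^*\cap G$, but it sits on top of the unproved steps above.
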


\begin{proof} If $M$ contains no non-cyclic free subgroups, then by Theorem \ref{theorem_2.5}, $M$ is abelian-by-locally finite. Let $A$ be a maximal subgroup of $M$ with respect to the property: $A$ is an abelian  normal  subgroup of $M$ such that $M/A$ is locally finite.  As we have seen in the proof of Theorem \ref{theorem_2.5}, $M$ is irreducible. If $M$ is imprimitive, then by Lemma \ref{lemma_2.6}, $M$ contains a copy of $\SL_r(D)$ for some integer $r\geq 1$. In view of \cite[Theorem 4.3]{ngoc_bien_hai_17}, $M$ contains a non-cyclic free subgroup, a contradiction. Hence, $M$ is primitive. Since  $M$ is irreducible,  $A$ is completely reducible. Consequently, by \cite[1.1.12, p.7]{shirvani-wehrfritz}, $F[A]$ is a semisimple artinian ring.  The Wedderburn-Artin Theorem implies that
$$ F[A] \cong \M_{n_1}(D_1)\times \M_{n_2}(D_2)\cdots\times \M_{n_s}(D_s),$$
where $D_i$ are division $F$-algebras, $1\leq i\leq s$. Since $F[A]$ is abelian, $n_i = 1$ and $K_i:=D_i=Z(D_i)$ are fields that contain $F$ for all $i$. Therefore, 
$$F[A]\cong K_1\times K_2\cdots\times K_s.$$
Since $M$ is primitive, in view of   \cite[Proposition 3.3]{dorbidi2011}, $F[A]$ is an integral domain, so $s=1$. Hence, $K:=F[A]$ is a subfield of $\M_n(D)$ containing $F$. By Lemma \ref{lemma_2.7}, $F[M]\cong\M_{t_1}(\Delta_1)$ for some $t_1\geq 1$ and some locally finite dimensional division $F$-algebra $\Delta_1$. Since $M$ is maximal in $G$, either $F[M]^*\cap G=M$ or $G\subseteq F[M]^*$. If the first case occurs, then $M$ is a non-abelian almost subnormal subgroup of $\GL_{t_1}(\Delta_1)$ containing no non-cyclic free subgroups that  contradicts to \cite[Theorem~4.3]{ngoc_bien_hai_17}. So, we may  assume that $G\subseteq F[M]^*$. By Lemma \ref{lemma_2.4}, $F[M]=\M_n(D)$, hence,  $Z(M)=M\cap F^*$. Now, we consider two possible  cases: 

\bigskip

\textit{Case 1: $A$ is not contained in $F$}

\bigskip

We claim that $D$ is centrally finite. Take some element $\alpha\in A\backslash F$. The elements of the set $\alpha^M:=\{m^{-1}\alpha m\vert m\in M\}\subset A$ have the same minimal polynomial over $F$. This implies that $|\alpha^M|<\infty$, so $\alpha$ is an $FC$-element, and consequently, $[M:C_M(\alpha)]<\infty$. Setting $$L_1=F(\alpha^M), N=Core_M(C_M(\alpha)), H_1=C_{\M_n(D)}(L_1),$$ then $L_1$ is a subfield of $K$ properly containing $F$, $A\unlhd N\unlhd M$, $N\leq H_1^*$, and $[M:N]$ is finite. Since $H_1=C_{\M_n(D)}(\alpha^M)$, in view of \cite[Proposition 3.3]{dorbidi2011}, $H_1\cong \M_{t_2}(\Delta_2)$ for some $t_2\geq 1$ and some locally finite dimensional division $F$-algebra $\Delta_2$. The subgroup $M$ normalizes $L_1^*$, so it also normalizes $H_1^*$. Therefore,  $M\subseteq N_G(H_1^*)\subseteq G$. Let $B=H_1^*\cap G$. By the maximality of $M$ in $G$, either $G=N_G(H_1^*)$ or $M=N_G(H_1^*)$. Assume that the first case occurs. Then, we have 
$$\alpha\in B \unlhd N_G(H_1^*)=G.$$
This implies that $B$ is a non-central almost subnormal subgroup in $\GL_n(D)$. Therefore $H_1=F[B]=\M_n(D)$ by Lemma \ref{lemma_2.4}. From this, we conclude that $L_1\subseteq F$, a contradiction. Therefore, we may assume that $M=N_G(H_1^*)$, and thus $B=H_1^*\cap G\unlhd M$. The conditions $N\leq B$ and $[M:N]<\infty$ imply that $[M:B]<\infty$. Moreover, $B$ is an almost subnormal subgroup of $H_1^*=\GL_{t_2}(\Delta_2)$ containing no non-cyclic free subgroups, so in view of \cite[Theorem 4.3]{ngoc_bien_hai_17}, we conclude that $B \subseteq Z(\Delta_2)$. In other words, $M$ is abelian-by-finite. Consequently, $F[M]=\M_n(D)$ is a PI-ring, and by Kaplansky's theorem,  $[D:F]<\infty$.

The last paragraph shows that $A\subseteq B$ and $M/B$ is finite. By the maximality of $A$ in $M$, it follows that $A=B$. Since $M\subseteq N_G(F[A]^*)\subseteq G$ and $M$ is maximal in $G$, either $N_G(F[A]^*)=G$ or $N_G(F[A]^*)=M$. In the first case, $F[A]^*\cap G$ is an abelian almost subnormal subgroup of $\GL_n(D)$, so $A\subseteq F[A]^*\cap G\subseteq F$ by \cite[Theorem 4.3]{ngoc_bien_hai_17}, contradiction with case 1. In the second case, $F[A]^*\cap G\unlhd M$. Moreover, since $A\subseteq F[A]^*\cap G$ and $M/A$ is finite, $M/F[A]^* \cap G  $ is also finite. In short, we have ${N_G}(K^*) = M$, $K^* \cap G \unlhd M$ and $M/ K^* \cap G$ is finite. Furthermore,  since $K^* \cap G $ is abelian, by maximality of $A$ in $M$, we conclude that $A=K^*\cap G $. Recall that $A=B$ and $B=H_1^*\cap G$. From this, one has $K^*\cap G=H_1^*\cap G$. Setting $H=C_{\M_n(D)}(K)$, then $K\subseteq H\subseteq H_1$, and thus $K^*\cap G=H^*\cap G$.

Recall that $A$ is normal in $M$, so for any $a\in M$,  the mapping $\theta_a:K\to K$ given by $\theta_a(x)=axa^{-1}$ is well defined. It is clear that $\theta_a$ is an $F$-automorphism of $K$. Thus, the mapping $$\psi:M\to \Gal(K/F)$$ defined by $\psi(a)=\theta_a$ is a group homomorphism with $$\mathrm{ker}\psi=C_M(K^*)=C_{\M_n(D)}(K)^*\cap M=K^*\cap G.$$ Since $F[M]=\M_n(D)$, it follows that $C_{\M_n(D)}(M)=F$. Therefore, the fixed field of $\psi(M)$ is $F$. From this, we conclude that $\psi$ is a surjective homomorphism, and $K/F$ is a Galois extension.  Hence, $M/K^*\cap G\cong\Gal(K/F)$ is a finite group. 

Now, we show that $K$ is a maximal subfield of $\M_n(D)$. To see this, suppose that $\{x_1,\ldots,x_m\}$ is a transversal of $K^*\cap G$ in $M$. Since $M$ normalizes $K^*$,  $$R=x_1K+x_2K+\cdots+x_mK$$  is a subring of $M_n(D)$ containing both $F$ and $M$. Therefore, $R=\M_n(D)$ and $$[\M_n(D):K]_r=[R:K]_r\leq|M/K^*\cap G|=|\Gal(K/F)|=[K:F].$$ By Centralizer Theorem \cite[(vii), p.42] {draxl}, $[M_n(D):K]_r=[H:F]$. Now, the conditions $[H:F]\leq [K:F] $ and $K \subseteq H$ imply $H=K$. In orther words, $K$ is a maximal subfield of $\M_n(D)$.

To prove the simplicity of $M/K^*\cap G$, suppose that  $M_1$ is a normal subgroup of $M$ strictly containing $K^*\cap G$, i.e.
$$K^*\cap G\varsubsetneq M_1 \unlhd M.$$
We have to show $M_1=M$. Indeed, by setting $S=F[M_1]=K[M_1]$, in view of Lemma \ref{lemma_2.7}, we have $S\cong \M_{t_3}(\Delta_3)$ for some $t_3\geq 1$ and some locally finite dimensional division $F$-algebra $\Delta_3$. The condition $M\subseteq N_G(S^*)\subseteq G$ and the maximality of $M$ imply that either $M= N_G(S^*)$ or $N_G(S^*) = G$. Observe that $M_1$ is a non-abelian subgroup contained in $S^*\cap G$. Hence, if the first case occurs, then $S^*\cap G$ is a non-abelian almost subnormal subgroup of $S^*=\GL_{t_3}(\Delta_3)$ containing no non-cyclic free subgroups that contradicts to \cite[Theorem 4.3]{ngoc_bien_hai_17}. This contradiction forces $N_G(S^*)=G$, so $S=\M_n(D)$ by Lemma~ \ref{lemma_2.4}. In other words, we have $K[M]=K[M_1]=\M_n(D)$. Suppose that $\{y_1,\ldots,y_{v}\}$ is a transversal of $K^*\cap M_1$ in $M_1$. Since $M_1$ normalizes $K^*$, $Q=y_1K+\cdots+y_{v}K$ is a subring of $\M_n(D)$ containing both $K$ and $M_1$, so  $Q=\M_n(D)$. In view of the inclusions $K^*\cap G\subseteq M_1\subseteq M\subseteq G$, we have $K^*\cap G\subseteq K^*\cap M_1\subseteq K^*\cap G$,  which implies $K^*\cap M_1=K^*\cap G.$
Hence,
$$[K:F]=[\M_n(D):K]_r=[Q:K]_r\leq|M_1/K^*\cap M_1|\leq|M/K^*\cap G|=[K:F].$$ 
Consequently, $M_1/K^*\cap M_1=M/K^*\cap G$, so $M_1=M$. 

To see that $K^*\cap G$ is the Fitting subgroup of $M$, it suffices to show that this group
is a maximal nilpotent normal subgroup of $M$. Suppose that $K^*\cap G \subseteq  M_2 \subseteq M$ is nilpotent normal subgroup of $M$. By the simplicity of $M/K^*\cap G$, we have either $K^*\cap G = M_2$ or $M_2 = M$. If the first case occurs, then we are done since  $K^*\cap G$ is a maximal abelian normal subgroup of $M$. Now, suppose that $M_2 = M$. Since $[D:F]<\infty$, $M$ may be viewed as a completely reducible nilpotent linear group. By a result in \cite[Corollary 6.5, p.114]{dixon}, $[M:Z(M)]<\infty$. Let $\{z_1,\ldots,z_l\}$ be a tranversal of $Z(M)$ in $M$. Take $x\in G\setminus M$ and set $H=\langle z_1,\ldots,z_l,x \rangle$. Since $M$ is maximal in $G$, it follows $G=HZ(M)$. Recall that $F[M]=\M_n(D)$, from which it follows $Z(M)=M\cap F^*$, so $Z(M)\subseteq F^*$. Now, we have $G'=H'\subseteq H$ because $G=HZ(M)$. Consequenly, $H$ is normal in $G$, so $H$ is a finitely generated almost subnormal subgroup of $\GL_n(D)$. In view of \cite[Theorem 5.4]{ngoc_bien_hai_17}, $H$, and hence $M$ is abelian, a contradiction. 

\bigskip

\textit{Case 2: $A$ is contained in $F$}

\bigskip

If this is the case, then $M$ is radical over $F$. By Theorem \ref{theorem_2.8}, $[D:F]<\infty$ and $M$ is abelian-by-finite. Let $C$ be a maximal abelian normal subgroup of finite index in $M$. Note that $Z(M)=M\cap F^*$, so, if $C\subseteq F$, then $[M:Z(M)]<\infty$. By the same argument used in the last paragraph in Case 1, we conlucde that $M$ is abelian, a contradiction. We may therefore assume that $C$ is not contained in $F$. Replacing $A$ by $C$ in Case 1, we get the result.

\end{proof}

\end{document}